\newtheorem{Theorem}{Theorem}[section]
\newtheorem{Lemma}[Theorem]{Lemma}
\newtheorem{Proposition}[Theorem]{Proposition}
\theoremstyle{definition}
\newtheorem{Definition}[Theorem]{Definition}
\theoremstyle{remark}
\newtheorem{Remark}[Theorem]{Remark}
\numberwithin{equation}{section}
\newcommand{\cA}{\ensuremath{\mathsf A}}
\newcommand{\cB}{\ensuremath{\mathsf B}}
\newcommand{\cG}{\ensuremath{\mathsf G}}
\newcommand{\cH}{\ensuremath{\mathsf H}}
\newcommand{\cQ}{\ensuremath{\mathsf Q}}
\newcommand{\cX}{\ensuremath{\mathsf X}}
\newcommand\Ker{\ensuremath{\text{Ker}}}
\newcommand\coker{\ensuremath{\mathrm{coker\,}}}
\newcommand\pr{\ensuremath{\text{pr}}}
\newcommand\CG{\ensuremath{2\mathbb{G}\mathsf{p}}}
\newcommand\SCG{\ensuremath{\mathbb{S}\mathsf{ym}2\mathbb{G}\mathsf{p}}}
\newcommand\Set{\ensuremath{\mathsf{Set}}}
\newcommand\Gp{\ensuremath{\mathsf{Gp}}}
\newcommand\XExt{\ensuremath{\mathsf{XExt}}}
\newcommand\BExt{\ensuremath{\mathsf{BExt}}}
\newcommand\OpExt{\ensuremath{\mathsf{OpExt}}}
\newcommand\Mod{\ensuremath{\mathsf{Mod}}}
\newcommand\Ab{\ensuremath{\mathsf{Ab}}}
\begin{document}

\title{The third cohomology $2$-group}
\author{Alan S.~Cigoli$^1$ \and Sandra Mantovani$^2$\and Giuseppe Metere$^3$}

\date{\small
    $^1$ Dipartimento di Matematica ``Giuseppe Peano''\\ Universit\`a degli Studi di Torino \\[2ex]%
    $^2$ Dipartimento di Matematica ``Federigo Enriques''\\ Universit\`a degli Studi di Milano\\[2ex]%
   $^3$ Dipartimento di Matematica e Informatica\\  Universit\`a degli Studi di Palermo\\[2ex]
}

%----------Author 1
%\author{Alan S.~Cigoli}
%\address{Dipartimento di Matematica ``Giuseppe Peano''\\ Universit\`a degli Studi di Torino\\ via Carlo Alberto, 10, 10123 Torino, Italy}
%\email{alan.cigoli@unito.it}

%----------Author 2
%\author[S.~Mantovani]{Sandra Mantovani}
%\address{Dipartimento di Matematica ``Federigo Enriques''\\ Universit\`a degli Studi di Milano\\ via C.~Saldini, 50, 20133 Milano, Italy}
%\email{sandra.mantovani@unimi.it}

%----------Author 3
%\author{Giuseppe Metere}
%\address{Dipartimento di Matematica e Informatica\\  Universit\`a degli Studi di Palermo\\ via Archirafi, 34, 90123 Palermo, Italy}
%\email{giuseppe.metere@unipa.it}

%\thanks{Partial financial support was received from INdAM -- Istituto Nazionale di Alta Matematica ``Francesco Severi'' -- Gruppo Nazionale per le Strutture Algebriche, Geometriche e le loro Applicazioni.}

%\subjclass{20J06, 18E13, 18G45, 18D30}
%\keywords{2-groups, cartesian monoidal opfibration, cohomology, crossed extension, semi-abelian category.}

%\date{June 2, 2022}

\maketitle

\begin{abstract}

In this paper we show that a finite product preserving  opfibration  can be factorized through an opfibration with the same property, but with groupoidal fibres. 
If moreover the codomain is additive, one can endow each fibre of the new opfibration  with a canonical symmetric $2$-group structure. We then apply such factorization to the opfibration that sends a crossed extension of a group $C$ to its corresponding $C$-module. The symmetric $2$-group structure so obtained on the fibres, defines the third cohomology $2$-group of $C$, with coefficients in a $C$-module. We show that the usual third and second cohomology groups are recovered as its homotopy invariants. Furthermore, even if all results are presented in the category of groups, their proofs are valid in any strongly protomodular semi-abelian category, once one adopts the corresponding internal notions.\\

\noindent {\bf MSC}: 20J06, 18E13, 18G45, 18D30. 

\noindent  {\bf Keywords}: 2-groups, cartesian monoidal opfibration, cohomology, crossed extension, semi-abelian category.
\end{abstract}

%%% ----------------------------------------------------------------------
%%% ----------------------------------------------------------------------
%\tableofcontents

\section{Introduction} \label{sec:intro}

For a fixed group $C$, let $\Mod(C)$ denote the category of $C$-modules, whose objects are pairs $(B,\xi)$, with $\xi\colon C\times B\to B$ a group action on the abelian group $B$, and morphisms are  equivariant  group homomorphisms.  $\Mod(C)$ is an abelian category, and it is well known to be equivalent to  the category $\Ab(\Gp/C)$ of abelian groups in $\Gp/C$  (also called  Beck $C$-modules, see \cite{Beck}). 

With any $C$-module $(B,\xi)$, we can associate the groupoid $\OpExt(C,B,\xi)$ of abelian extensions of $C$, with kernel $B$ and induced action $\xi$, where, given an extension of C with abelian kernel $B$
\[
\xymatrix{
0 \ar[r] & B \ar[r]^j & E \ar[r]^p & C \ar[r] & 0\,,
}
\]
the  action of $C$ on $B$ is given by the conjugation action of $E$, via any pointed set-theoretical section of  $p$. 

If we consider the set $\pi_0(\OpExt(C,B,\xi))$ of connected components,  we can extend the previous assignment to a functor  \[H^2(C,-)\colon  \Mod(C) \to \Set\] which \emph{preserves finite products}. Actually, likewise any $\Set$-valued functor with  abelian domain which preserves finite products, $H^2(C,-)$ factors through the category $\Ab$ of abelian groups (see \cite{BJ}, where the Baer sum of abelian extensions in a protomodular category is introduced in this way). As a consequence, the set $\pi_0(\OpExt(C,B,\xi))$ can be endowed with an  abelian group  structure, which is isomorphic to the usual  $H^2(C,B,\xi)$ (see \cite{Homology}).
 
It is well known since \cite{Gerstenhaber} and \cite{Holt} that also the cohomology functor $H^{3}(C,-)$ can be described in a similar fashion, i.e.\ using \emph{crossed extensions} (see \Cref{sec:XExt}):
\[
\xymatrix{
0 \ar[r] & B \ar[r]^-j & E_2 \ar[r]^\partial & E_1 \ar[r]^-p & C \ar[r] & 0\,.
}
\]
In this case the categories $\XExt(C,B,\xi)$ of such extensions are no longer groupoids. However, their connected components define again a $\Set$-valued  functor which preserves finite products. So, it is possible to apply the same argument as for $H^2(C,-)$.

In fact, the abelian group structure on $\pi_0(\OpExt(C,B,\xi))$ is nothing but the shadow of a richer structure of a symmetric 2-group $\cH^2(C,B,\xi)$ which can be defined directly on $\OpExt(C,B,\xi)$  (see \cite{CM16}).  This result has been anticipated by Bourn in \cite{Bourn99}, where he singles out some sufficient conditions to lift a monoidal closed structure to the fibres of a given opfibration. The groupoids $\OpExt(C,B,\xi)$ are actually  the fibres of an opfibration $P_C \colon \OpExt(C) \to \Mod(C)$ which preserves finite products. An articulate treatment of the subject can be found in \cite{groupal}, where  cartesian monoidal opfibrations \cite{MV2020} are investigated (see also \cite{Shulman}). It turns out that, when the codomain of a finite product preserving opfibration is additive, then its fibres are symmetric $2$-groups as soon as they are groupoids, and this is precisely what happens in the case of abelian extensions.   
 
 In the present paper we deal with the case of  opfibrations as before, but whose fibres are not groupoids. In \Cref{prop:fractions_cmo} we show that a $2$-group structure can still be defined on the fibres of a new opfibration obtained from the original one via a suitable category of fractions. 

The case of crossed extensions is exactly of the kind  described above. In fact, the categories $\XExt(C,B,\xi)$ turn out to be the fibres of an opfibration $\Pi_C\colon \XExt(C)\to\Mod(C)$ which again preserves finite products. The first step is to  investigate the category of fractions $[\BExt](C)$ of $\XExt(C)$ with respect to  $\Pi_C$-vertical maps (which are indeed weak equivalences between the underlying crossed modules, see \cite{AMMV13}).  Relying on results in \cite{categorical_ot} and \cite{Cat-frac}, in \Cref{thm:BExt} we show that such fractions can be described using diagrams called \emph{butterflies} by Noohi (see \cite{Noohi}).  By applying \Cref{prop:fractions_cmo}, we get that the functor $\Pi_C$ factors through a new cartesian monoidal opfibration $\overline\Pi_C$ with groupoidal fibres. Therefore, for each $C$-module $(B,\xi)$, the corresponding fibre of $\overline\Pi_C$ becomes a symmetric $2$-group which can be considered as the \emph{third cohomology $2$-group}, and will be denoted by $\cH^{3}(C,B,\xi)$.

 The abelian group  $\pi_0(\cH^{3}(C,B,\xi))$ of connected components of the $2$-group  $\cH^{3}(C,B,\xi)$ is isomorphic to the usual $H^{3}(C,B,\xi)$, as observed above. On the other hand, the abelian group  $\pi_1(\cH^{3}(C,B,\xi))$ given by the automorphisms of the monoidal identity  $I_\xi$ is isomorphic to the group $H^{2}(C,B,\xi)\cong \pi_0(\cH^{2}(C,B,\xi))$. This result is obtained as a consequence of \Cref{thm:I}, where it is proved that the $2$-group $\cH^{2}(C,B,\xi)$ is monoidally equivalent to the $2$-group $Eq(I_\xi)$ of auto-equivalences of $I_\xi$, with tensor product given by butterfly composition.

We remark here that, although all results are presented having as base category the category of groups, all proofs are indeed internal, in the sense that they can be performed in any strongly protomodular semi-abelian category (see \cite{BB}), once one adopts the corresponding internal notions.

\bigskip

We use \emph{sans-serif} capital letters for categories ($\cA, \cB$ etc.). % , and double stroke  for 2-categories ($\bA, \bB$ etc.). %$\mathbf{CAT}$ is the 3-category of 2-categories, pseudo\-functors, pseudonatural transformations and modifications.  
%We denote by $\CAT$ the 2-category of locally small categories. In order to ignore size issues in the rest of the paper, we declare that the category of small categories $\Cat$ is an object of $\CAT$. %, and the latter is an object of $\mathbf{CAT}$.  
Composition of arrows (1-cells) $f, g$ is denoted $g\cdot f$, or just $gf$. %, and the same notation is adopted for the whiskering of a 1-cell with a 2-cell. Horizontal composition of 2-cells is always denoted by $\beta\cdot\alpha$, since the juxtaposition is reserved to vertical composition of 2-cells.
Binary products  in a category $\cB$ are denoted as usual, with projections $\pr_1,\pr_2$. %, and the terminal map from an object $A$ is $\tau^\cB_A\colon A\to I_\cB$, with superscripts and subscripts omitted when clear from the context. We adopt similar conventions for finite products in $\CAT$, where the terminal category with unique object $\star$ is denoted  by $\cI$. 
%In fact, finite products in $\CAT$ are also 2-limits, in that they also satisfy a universal property with respect to 2-cells. We will refer to it as \emph{the 2-dimensional universal property of products}, to make a distinction from the usual (1-dimensional) universal property of products. In the case of binary products, for two categories $\cA$ and $\cB$, their product $\xymatrix{\cA&\ar[l]_-{\pi_1}\cA\times\cB\ar[r]^-{\pi_2}&\cB}$ satisfies: for any pair of natural transformations $\alpha\colon a\Rightarrow a'\colon \cC\to\cA$ and $\beta\colon b\Rightarrow b'\colon \cC\to\cB$, there exists a unique natural transformation $\gamma\colon c\Rightarrow c'\colon\cC\to\cA\times\cB$ such that $\pi_1\circ \gamma =\alpha$ and $\pi_2\circ \gamma =\beta$.
We denote by $\CG$ the $2$-category of $2$-groups; its objects are (not necessarily strict) $2$-groups (also known as \emph{categorical groups}), morphisms are monoidal functors, and $2$-morphisms are monoidal natural transformations. Their symmetric monoidal version organizes in the $2$-category $\SCG$. There is a $2$-functor 
\[
(\pi_0,\pi_1)\colon \CG\to \Mod
\]
where the category of group modules $\Mod$ is considered as a $2$-discrete $2$-category. It associates with any $2$-group $\cG$ the group module $(\pi_0(\cG), \pi_1(\cG))$ of its \emph{homotopy invariants}, with $\pi_0(\cG)$  the group of connected components of $\cG$ and $\pi_1(\cG)$  the abelian group of the automorphisms of the identity object of $\cG$, endowed with a canonical $\pi_0(\cG)$-module structure (see for instance \cite{BL2004}). Notice that, if the $2$-group is symmetric, the group $\pi_0(\cG)$ is abelian. 
%Let $\cB$ be a category. A pseudofunctor $F=(F,\phi,\phi^1)\colon \cB\to \CAT$ is a weak 2-functor, where $\cB$ is regarded as a locally discrete 2-category, and $\phi_{f,g}\colon F(g)\circ F(f)\toiso F(g\circ f)$, $\phi^1_B\colon 1_{F(B)}\toiso F(1_B)$ are natural isomorphisms, for $f,g$ composable arrows and $B$ object of $\cB$, satisfying coherence conditions. Let $P\colon\cX\to\cB$ be a (Grothendieck) cloven opfibration. Then it is possible to associate with $P$ the pseudofunctor $F\colon \cB\to\CAT$ defined as follows. For any object $A$ of $\cB$, the image $F(A)$ is the fibre of $P$ over $A$. For an arrow $f\colon A\to B$ of $\cB$, $F(f)=f_*\colon F(A)\to F(B)$ is the change of base functor (determined by the cleavage), where for any $X\in F(A)$, $\hat{f}\colon X\to f_*(X)$ is a cocartesian lifting of $f$ at the object $X$. The natural isomorphisms $(\phi,\phi^1)$ are consequences of the universal properties of cocartesian liftings. %Notice that,  in the case of composable arrows $f,g$, for the codomain of a composition of liftings, we write $g_*f_*(X)$ rather than $g_*(f_*(X))$. 
Finally, throughout the paper, we will use the additive notation for any group operation.

\section{Preliminaries} \label{sec:preliminaries}

In this section we recall from \cite{groupal} some basic results on monoidal opfibrations, and complete the description of the symmetric $2$-group  $\cH^2(C,B,\xi)$ given in the introduction.

\begin{Definition}\label{def:cart_mon_opfib}
	Let $\cX, \cB$ be categories with finite products, and $P\colon \cX\to\cB$ be an opfibration. We say that $P$ is \emph{cartesian monoidal} if it strictly preserves finite products and  the product of two cocartesian morphisms is still cocartesian.
\end{Definition}

In other words, cartesian monoidal opfibrations are nothing but monoidal opfibrations, with the monoidal structures given by  cartesian products, see  \cite[Definition 12.1]{Shulman}.

\begin{Proposition} \label{prop:cmo_groupoidal}
Let \cX\ and \cB\ be categories with finite products and $P\colon \cX \to \cB$ be an opfibration with groupoidal fibres. Then $P$ is a cartesian monoidal opfibration if and only if it strictly preserves finite products.
\end{Proposition}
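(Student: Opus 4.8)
The forward implication is immediate: by \Cref{def:cart_mon_opfib} a cartesian monoidal opfibration strictly preserves finite products by definition. For the converse, assume that $P$ strictly preserves finite products; the only thing left to verify is the remaining clause of \Cref{def:cart_mon_opfib}, namely that the product $\phi_1\times\phi_2$ of two cocartesian morphisms is again cocartesian. The plan is to deduce this from a stronger statement that makes the clause automatic once the fibres are groupoids: \emph{if $P$ is an opfibration with groupoidal fibres, then every morphism of $\cX$ is cocartesian}. Granting this, whenever $\phi_1$ and $\phi_2$ are cocartesian their product $\phi_1\times\phi_2$ is in particular a morphism of $\cX$ (it exists because $\cX$ has finite products, and strict preservation guarantees it lies over $f_1\times f_2$), hence it is cocartesian, so $P$ is cartesian monoidal.

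To establish the italicized claim I would fix an arbitrary morphism $\phi\colon X\to Y$ lying over $f\colon P(X)\to P(Y)$ and compare it with a chosen cocartesian lift. Since $P$ is an opfibration, let $\kappa\colon X\to X'$ be a cocartesian lift of $f$ at $X$, so that $P(\kappa)=f$ and $P(X')=P(Y)$. As $P(\phi)=f=\id_{P(Y)}\cdot f$, the universal property of the cocartesian morphism $\kappa$ yields a unique comparison $\bar\phi\colon X'\to Y$ lying over $\id_{P(Y)}$ and satisfying $\bar\phi\cdot\kappa=\phi$. The point is that $\bar\phi$ is \emph{vertical}, hence an arrow of the fibre over $P(Y)$; since that fibre is a groupoid by hypothesis, $\bar\phi$ is automatically an isomorphism.

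It then remains to invoke the standard fact that the composite of a cocartesian morphism with a vertical isomorphism is again cocartesian: a vertical isomorphism is itself cocartesian (over an identity), and cocartesian morphisms are closed under composition. Consequently $\phi=\bar\phi\cdot\kappa$ is cocartesian, which proves the claim and hence the proposition. I expect the main point to be precisely the construction of the comparison morphism $\bar\phi$ and the verification that it is vertical, so that the groupoidal hypothesis can be applied to conclude it is invertible; by contrast, strict preservation of finite products enters only to ensure that $\phi_1\times\phi_2$ is a genuine morphism of $\cX$ lying over $f_1\times f_2$, to which the claim can then be applied.
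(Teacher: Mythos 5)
Your argument is correct and is essentially the paper's own proof: factor an arbitrary morphism through a cocartesian lift of its image, observe that the vertical comparison is an isomorphism because the fibres are groupoids, and conclude that every morphism (in particular every product of cocartesian morphisms) is cocartesian. The extra detail you supply about why the composite of a cocartesian morphism with a vertical isomorphism is cocartesian is exactly the standard fact the paper leaves implicit.
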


\begin{proof}
It suffices to notice that each morphism in \cX\ factorizes through a cocartesian lifting of its image by $P$, with a $P$-vertical  comparison morphism, hence an isomorphism by assumption. So any morphism, in particular any product of morphisms, is cocartesian.
\end{proof}

If the codomain $\cB$ is an additive category, every object of $\cB$ is endowed with a (unique) commutative monoid structure. In this case, the  monoidal opfibration induces on its fibres and on change-of-base functors  canonical symmetric monoidal structures, as shown in \cite[Theorem 4.2]{MV2020} (see also \cite[Remark 4.8]{groupal}).  Furthermore, we have the following result, see \cite[Theorem 5.9]{groupal}.

\begin{Theorem}\label{thm:fibres_groupoids}
	Let $P\colon \cX\to \cB$ be a cartesian monoidal opfibration. If the category $\cB$ is additive, then the symmetric monoidal structures induced on the  fibres are $2$-groups  if and only if the fibres are groupoids.	
\end{Theorem}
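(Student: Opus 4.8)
The forward implication is immediate from the definition: a $2$-group is in particular a groupoid, so if the induced symmetric monoidal structure on a fibre is a $2$-group, then that fibre is a groupoid. The content is the converse, and the plan is to show that when every fibre is a groupoid, each fibre $\cX_B$ is a $2$-group, i.e.\ that every object of $\cX_B$ admits a tensor inverse.

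Since $\cB$ is additive, each object $B$ carries a canonical abelian group object structure $(B,m,e,\nu)$, with $m\colon B\times B\to B$ the addition, $e\colon T_\cB\to B$ the zero, and $\nu\colon B\to B$ the inversion. By \cite[Theorem 4.2]{MV2020} the tensor product of $\cX_B$ is $X\otimes Y=m_*(X\times Y)$ and the unit is $I=e_* T$, where $T_\cB$, $T$ denote the terminal objects of $\cB$, $\cX$, and $(-)_*$ the cocartesian pushforward. I would exhibit $X^\vee:=\nu_* X$ as an inverse of a given $X\in\cX_B$. First I would compute $X\otimes X^\vee=m_*(X\times\nu_* X)$: since $P$ is cartesian monoidal, the product of the cocartesian lift $X\to\nu_* X$ (over $\nu$) with $\id_X$ (over $\id_B$) is again cocartesian, whence $X\times\nu_* X\cong(\id_B\times\nu)_*(X\times X)$, and pseudofunctoriality of pushforward yields $X\otimes X^\vee\cong s_*(X\times X)$ with $s:=m\circ(\id_B\times\nu)\colon B\times B\to B$ the subtraction map.

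Next I would invoke the groupoid hypothesis. The diagonal $\delta_X\colon X\to X\times X$ lies over $\Delta_B\colon B\to B\times B$, so it factors as $\delta_X=v\circ c$ through its cocartesian lift $c\colon X\to(\Delta_B)_* X$, with $v\colon(\Delta_B)_* X\to X\times X$ vertical over $B\times B$; as $\cX_{B\times B}$ is a groupoid, $v$ is an isomorphism, so $(\Delta_B)_* X\cong X\times X$. Combining, $X\otimes X^\vee\cong s_*(\Delta_B)_* X\cong(s\circ\Delta_B)_* X$. The group-object inverse axiom says precisely $s\circ\Delta_B=m\circ\langle\id_B,\nu\rangle=e\circ{!_B}$, where ${!_B}\colon B\to T_\cB$, so $X\otimes X^\vee\cong(e\circ{!_B})_* X\cong e_*\bigl(({!_B})_* X\bigr)$. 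Finally, $({!_B})_* X$ lies over $T_\cB$ and receives the unique arrow $X\to T$ into the terminal object of $\cX$; factoring this arrow through the cocartesian lift $X\to({!_B})_* X$ produces a vertical map $({!_B})_* X\to T$, again an isomorphism because $\cX_{T_\cB}$ is a groupoid. Hence $X\otimes X^\vee\cong e_* T=I$, and by symmetry of the monoidal structure $X^\vee\otimes X\cong I$ as well, so $X$ is invertible and $\cX_B$ is a $2$-group.

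I expect the only delicate points to be the two places where the groupoid hypothesis enters — the identifications $(\Delta_B)_* X\cong X\times X$ and $({!_B})_* X\cong T$ — since these are exactly the vertical comparison maps arising from cocartesian factorizations, and it is precisely their invertibility (equivalently, that every vertical morphism is an isomorphism) that fails when the fibres are not groupoids; this is what makes the statement an equivalence rather than a one-sided implication. Care is also needed to keep the coherence data of the pseudofunctor $(-)_*$ explicit enough to check that the isomorphism $X\otimes X^\vee\cong I$ is natural and compatible with the monoidal constraints, but this is routine once the object-level computation above is in place.
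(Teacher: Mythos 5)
Your argument is correct. Be aware, though, that the paper contains no proof of this statement to compare against: it is imported verbatim as \cite[Theorem 5.9]{groupal}, so the only thing one can check here is internal consistency with the rest of the text. On that score your proof fits well: the forward implication is indeed trivial (a $2$-group is in particular a groupoid), and for the converse your construction of the weak inverse as $\nu_*X$, with $\nu=-1_B$ the inversion of the canonical abelian group object structure on $B$, is the natural route given the description of the fibrewise tensor $X\otimes Y=m_*(X\times Y)$ and unit $e_*T$ from \cite[Theorem 4.2]{MV2020}. The two places where you invoke the groupoid hypothesis --- recognizing $\delta_X\colon X\to X\times X$ and $X\to T$ as cocartesian because their vertical comparison components are isomorphisms --- are exactly the mechanism the paper itself spells out in the proof of \Cref{prop:cmo_groupoidal} (when the fibres are groupoids, every morphism is cocartesian), so your use of it is consistent with the authors' own toolkit. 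The remaining steps (the product of a cocartesian lift with an identity is cocartesian by the cartesian monoidal hypothesis, pseudofunctoriality of pushforward, and the group axiom $m\circ\langle \id_B,\nu\rangle=e\circ{!_B}$) are all sound. One small remark: your closing concern about naturality of $X\otimes X^\vee\cong I$ and compatibility with the monoidal constraints is not actually needed for the statement --- a symmetric monoidal groupoid is a symmetric $2$-group as soon as each object admits some weak inverse, with no functoriality of $(-)^\vee$ required --- so the object-level computation already finishes the proof.
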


Let $C$ be a group, and $\OpExt(C)$ be the category of abelian extensions of the group $C$ and their morphisms. The functor $P\colon \OpExt(C)\to \Mod(C)$, which assigns to each abelian extension the canonical $C$-module structure on the kernel, is an instance of Bourn's direction functor (see \cite{Bourn99}). In \emph{op.cit.}, the author proved that the latter is an opfibration that preserves finite products and that cocartesian maps are stable under binary products. In other words, it is cartesian monoidal. As explained in \cite[Section 6.1]{groupal}, since the codomain of $P$ is additive, thanks to Theorem \ref{thm:fibres_groupoids}, each $P$-fibre $\OpExt(C,B,\xi)$ inherits a symmetric $2$-group structure:
\[
\cH^2(C,B,\xi):=(\OpExt(C,B,\xi),\otimes,E_{\rtimes})
\]
where the tensor product $\otimes$ is obtained by the classical construction used to define Baer sums, \emph{before taking the isomorphism classes of extensions}. The identity $E_\rtimes$ is the canonical extension associated with the semidirect product  $B\rtimes_{\xi}C$.  Moreover, each $C$-module morphism $f\colon B\to B'$ induces a symmetric monoidal functor
\[
f_*\colon \cH^2(C,B,\xi)\to \cH^2(C,B',\xi')\,.
\]
Concerning the homotopy invariants $\pi_0,\pi_1$, as recalled in the introduction, we have that \[\pi_0(\cH^2(C,B,\xi))\cong H^2(C,B,\xi)\,.\] On the other hand, the abelian group  $\pi_1(\cH^2(C,B,\xi))$ consists of the automorphisms of the  extension $E_{\rtimes}$. According to \cite[Proposition IV.2.1]{Homology}, this can be interpreted as the group of 1-cocycles, i.e. \[\pi_1(\cH^2(C,B,\xi))\cong Z^1(C,B,\xi)\,.\]

\section{Crossed extensions} \label{sec:XExt}

Recall that a \emph{crossed module} consists of a group homomorphism $\partial \colon E_2\to E_1$, endowed with an action of $E_1$ on $E_2$, satisfying the following conditions:
\begin{align*}
\text{i.} & \quad\partial(g\ast x)=g+\partial x-g & \text{(Pre-crossed module)} \\
\text{ii.} & \quad \partial x_1\ast x_2=x_1+x_2-x_1 & \text{(Peiffer)}
\end{align*}

Crossed modules, and their usual morphisms, organize in a category, which is equivalent to the category of internal groupoids in the category of groups. To fix notations, let us recall that each crossed module $\partial$ corresponds to the internal groupoid
\begin{equation} \label{diag:xmod_gpd}
\xymatrix{
E_2 \rtimes E_1 \ar@<1ex>[r]^-d \ar@<-1ex>[r]_-c & E_1, \ar[l]
}
\end{equation}
where the semidirect product operation is induced by the given action of $E_1$ on $E_2$ and the morphisms $d$ and $c$ are as follows:
\begin{align*}
d(x,g) & = \partial x+g, \\
c(x,g) & = g.
\end{align*}

Let $C$ be a group. A crossed extension $E$ of $C$ is given by
\[
\xymatrix{
 0 \ar[r] & B \ar[r]^{j} & E_2  \ar[r]^{\partial} & E_1 \ar[r]^{p}  & C  \ar[r] & 0
}
\]
where $\partial$ is endowed with a crossed module structure, with $B$ and $C$  fixed kernel and cokernel, respectively. Morphisms of crossed extensions of $C$ are given by commutative diagrams:

\[
\xymatrix{
 0 \ar[r] & B \ar[r]^{j} {\ar[d]_{\beta}} & E_2
{\ar[d]_{f_2}} \ar[r]^{\partial} & E_1 \ar[r]^{p} {\ar[d]_{f_1}} & C {\ar@{=}[d]} \ar[r] & 0 \\
 {0} {\ar[r]} &{B'} {\ar[r]^{j'}} & {E_2'} {\ar[r]^{\partial'}} &{E_1'} {\ar[r]^{p'}} & {C} {\ar[r]} & {0}
}
\]
where  the middle square is a crossed module morphism. We denote by $\XExt(C)$ the category  of \emph{crossed} extensions of $C$. 
%It is well-known that such a category is equivalent to the category of groupoids in groups. The latter extend to a 2-equivalence, if we endow both with 2-cells, given by natural trasformations, making both of them 2-categories.

In each crossed extension the kernel $B$ is central and it comes equipped with a $C$-module structure $(B,\xi)$, where $\xi$ is the action of $C$ over $B$ induced by  the action of $E_1$ on $E_2$ (see for instance \cite{KBrown}). 

One can lift a given crossed extension  along a $C$-module morphism $\beta\colon(B,\xi) \to (B',\xi')$ by means of what is called the \emph{push forward} along $\beta$ (see \cite{butterflies}, or \cite{pf} for the semi-abelian version):
\[
\xymatrix{
0 \ar[r] & B \ar[r]^j \ar[d]_{\beta} {\ar@{}[dr]|{\text{p.f.}}} & E_2 \ar[r]^{\partial} {\ar[d]} & E_1 \ar[r]^p {\ar@{=}[d]} & C \ar[r] {\ar@{=}[d]} & 0 \\
0 {\ar[r]} & B' {\ar[r]} & {B'\times^B E_2} {\ar[r]^-{\partial'}} & {E_1}{\ar[r]^p} & {C} {\ar[r]} &{0}
}
\]
Such liftings are universal, i.e.\  the functor
$
P_C\colon \XExt(C)\to \Mod(C)
$
is an opfibration (see \cite[Section 4]{yoneda}). Actually,  it is also cartesian monoidal. A proof in the context of strongly semi-abelian categories, based on properties of Bourn's 1-dimensional direction functor,  can be found in \cite{Rodelo2009}.

Indeed, $P_C$ has not groupoidal fibres, since  morphisms in the fibre over $(B,\xi)$ are of the kind $(1,f_1,f_2,1)$:
\[
\xymatrix{
0 \ar[r] & B\ar[r]^{j} \ar@{=}[d] & E_2 \ar[r]^{\partial} \ar[d]_{f_2} & E_1 \ar[r]^{p} \ar[d]^{f_1} & C \ar@{=}[d] \ar[r] & 0 \\
0 \ar[r] & B \ar[r]^{j'} & E_2' \ar[r]^{\partial'} & E_1' \ar[r]^{p'} & C \ar[r] & 0
}
\]
which induce weak equivalences of crossed modules (see \cite{AMMV13}). Such maps do not have inverses in general, so that it is not possible to endow the fibres of $P_C$ with a 2-group structure. The idea is to turn $P_C$ into an opfibration with groupoidal fibres, but still cartesian monoidal. This will be performed the next section.

\section{The symmetric $2$-group $\cH^3(C,B,\xi)$}

The categorical construction that we need in order to make  $P_C$-vertical morphisms invertible consists in taking the corresponding category of fractions. We rely on results obtained in \cite{AMMV13}, where such fractions are described by means of suitable diagrams called \emph{butterflies} (see \cite{Noohi}).

\bigskip
A butterfly between two crossed extensions of $C$ is depicted as a diagram
\begin{equation}\label{diag:butterfly}
\begin{aligned}
\xymatrix{
0 \ar[r] & B \ar[r]^{j} & E_2 \ar[rr]^{\partial} \ar[dr]_{\kappa} & & E_1 \ar[r]^{p} & C \ar[r] & 0 \\
& & & F \ar[dr]_{\gamma} \ar[ur]_{\delta} \\
0 \ar[r] & B' \ar[r]_{j'} & E_2' \ar[rr]_{\partial'} \ar[ur]_{\iota} & & E_1'\ar[r]_{p'} & C \ar[r] & 0
}	
\end{aligned}
\end{equation}
where the following conditions are satisfied:
\begin{itemize}
\item[i.] $\delta\cdot\kappa=\partial$, \,  $\gamma\cdot\iota=\partial'$ and $p\cdot \delta = p'\cdot\gamma$,
\item[ii.] $(\kappa,\gamma)$ is a complex, i.e.\ $\gamma\cdot \kappa = 0$ and $(\delta,\iota)$ is a short exact sequence, i.e.\ $\delta=\coker \iota$ and $\iota=\ker\delta$,
\item[iii.] the action of $F$ on $E_2$, induced by the one of $E_1$ on $E_2$ via $\delta$, makes $\kappa$ a pre-crossed module,
\item[iv.] the action of $F$ on $E_2'$, induced by the one of $E_1'$ on $E_2'$ via $\gamma$, makes $\iota$ a pre-crossed module.
\end{itemize}
We will use the short notation $\widehat{F}\colon E \to E'$ to denote the above butterfly. 

With each butterfly $\widehat{F}\colon E\to E'$ as in \eqref{diag:butterfly}, one can associate a $C$-module morphism $\beta\colon B \to B'$. In order to describe how it is constructed, let us recall from \cite{AMMV13} that each butterfly induces a span of crossed module morphisms as in the right-hand side of the following diagram:
\begin{equation} \label{diag:bfly_span}
\begin{aligned}
\xymatrix{
& B \ar@{=}[dl] \ar[r]^j & E_2 \ar[rr]^{\partial} \ar@{-->}[dr]_{\kappa} & & E_1 \\
B \ar[dr]_\beta \ar[r]^(.5){\overline{j}} & E_2 \times E_2' \ar[ur]^{\pr_1} \ar[dr]_(.4){\pr_2} \ar[rr]^{\kappa\sharp\iota} & & F \ar[dr]_{\gamma} \ar[ur]_{\delta} \\
& B' \ar[r]^(.4){j'} & E_2' \ar[rr]^{\partial'} \ar@{-->}[ur]_{\iota} & & E_1'
}
\end{aligned}
\end{equation}
where the crossed module $\kappa\sharp\iota$ is the cooperator (see \cite{BB}) of the arrows $\iota$ and $\kappa$, which exists since the images  $\kappa(E_2)$ and $\iota(E_2')$ commute in $F$ (see \cite{AMMV13} for details). More explicitly, $\kappa\sharp\iota(x,x')=\kappa x +\iota x'$. Now, the square $\delta\cdot\kappa\sharp\iota = \partial\cdot\pr_1$ turns out to be a pullback, so that one can choose $B$ as a kernel of $\kappa\sharp\iota$ via a morphism $\overline{j}$ with  $\pr_1\cdot\overline{j}=j$. We call $\beta\colon B \to B'$ the unique morphism such that $j'\cdot\beta=\pr_2\cdot\overline{j}$, whence $\overline{j}(b)=(j(b),j'\beta(b))$.

Actually, $\beta$ is a $C$-module morphism, so it admits an opposite $-\beta$. Notice that
\begin{equation} \label{eq:-beta}
\kappa\cdot j = \iota\cdot j'\cdot (-\beta).
\end{equation}
One can prove this by showing that $\kappa\cdot j + \iota\cdot j'\cdot \beta = 0$. The same argument holds in any semi-abelian category (see \cite[\S 1]{BB} for  details on symmetrizable morphisms). Consider the commutative diagram
\[
\xymatrix{
& & & & E_2 \\
B \ar@/^2ex/[urrrr]^j \ar@/_2ex/[drrrr]_{j'\beta} \ar[rr]|-{\langle 1,1 \rangle} & & B \times B \ar[r]^{1\times \beta} & B\times B' \ar[r]^{j\times j'} & E_2 \times E_2' \ar[u]_{\pr_1} \ar[d]^{\pr_2} \ar[r]^-{\kappa\sharp\iota} & F \\
& & & & E_2'
}
\]
The composite $\kappa\sharp\iota\cdot(j\times j')\cdot(1\times\beta)$ is the cooperator of $\kappa\cdot j$ with $\iota\cdot j'\cdot\beta$, so that precomposition with the diagonal $\langle 1,1\rangle\colon B \to B\times B$ yields the sum $\kappa\cdot j + \iota\cdot j'\cdot \beta$ (see Definition 1.3.21 in \cite{BB}). This sum is trivial since $(j\times j')\cdot (1\times\beta)\cdot\langle 1,1\rangle = \overline{j}$, the two terms being equal by composition with product projections.

After our discussion above, we are allowed to report more information in the picture of a butterfly. For instance, diagram \eqref{diag:butterfly} becomes:
\[
\xymatrix{
0 \ar[r] & B\ar@{-->}[dd]_{\beta} \ar[r]^{j} & E_2 \ar[rr]^{\partial} \ar[dr]_{\kappa} & & E_1 \ar[r]^{p} & C\ar@{=}[dd] \ar[r] & 0 \\
& & & F \ar[dr]_{\gamma} \ar[ur]_{\delta} \\
0 \ar[r] & B' \ar[r]_{j'} & E_2' \ar[rr]_{\partial'} \ar[ur]_{\iota} & & E_1'\ar[r]_{p'} & C \ar[r] & 0
}	\]
where $\beta$ is dashed in order to remind the reader that the  pentagon on the left only commutes up to a $-1$ factor. Notice that, on the other hand, the pentagon on the right does commute, since $p\cdot\delta=p'\cdot\gamma$ by hypotesis.

\smallskip
Butterflies between crossed extensions can be composed, and details can be found in \cite{AMMV13}. Here we just describe the construction of the composite butterfly. Let us consider two butterflies $\widehat{F}\colon E \to E'$ and $\widehat{F}'\colon E' \to E''$. The following diagram illustrates the composition  $\widehat{F}'\cdot \widehat{F}$ (the kernels and cokernels of the involved crossed modules are omitted in the diagram, for the sake of clarity)
\begin{equation}\label{diag:butterfly_composition}
\begin{aligned}
\xymatrix{
&& E_2\ar[ddl]_{\langle\kappa,0\rangle} \ar@{-->}[rr]_{\partial} \ar[dr]_{\kappa} & & E_1  \\
&& & F \ar[dr]_{\gamma} \ar[ur]_{\delta} \\
F'\cdot F\ar@/^10ex/[rrrruu]^{\overline{\delta\cdot p_1}}  \ar@/_10ex/[rrrrdd]_{\overline{\gamma'\cdot p_2}}
&\ar[l]_{q}F\times_{E_1'}F' \ar[drr]_{p_2}\ar[urr]^{p_1}  & \ar[l]_(.3){\langle\iota, \kappa' \rangle} E_2' \ar@{-->}[rr]_{\partial'}\ar[dr]^{\kappa'} \ar[ur]_{\iota} & & E_1'\\
&& & F' \ar[dr]_{\gamma'} \ar[ur]_{\delta'} \\
&& E_2'' \ar[uul]^{\langle0,\iota'\rangle} \ar@{-->}[rr]^{\partial''} \ar[ur]_{\iota'} & & E_1'' 
}	
\end{aligned}
\end{equation}
In diagram above, $F\times_{E_1'}F'$ is the pullback over the pair $(\gamma,\delta')$, $q$ is the cokernel of $\langle\iota, \kappa' \rangle$,   $\overline{\gamma'\cdot p_2}$ and $\overline{\delta\cdot p_1}$ are the unique arrows such that $\overline{\gamma'\cdot p_2}\cdot q =\gamma'\cdot p_2$ and $\overline{\delta\cdot p_1}\cdot q= \delta\cdot p_1$ respectively. The resulting butterfly $\widehat{F'\cdot F}\colon E\to E''$ is given by  the complex  $(q\cdot \langle\kappa,0\rangle,\overline{\gamma'\cdot p_2})$ and the short exact sequence $(q\cdot \langle0,\iota'\rangle, \overline{\delta\cdot p_1})$.

Identity butterflies are depicted as follows:
\begin{equation}\label{diag:id_butterfly}
\begin{aligned}
\xymatrix{
0 \ar[r] & B\ar@{==}[dd] \ar[r]^{j} & E_2 \ar[rr]^{\partial} \ar[dr]_{\ker (d)} & & E_1 \ar[r]^{p} & C \ar[r]\ar@{=}[dd]  & 0 \\
& & & E_2\rtimes E_1 \ar[dr]_{d} \ar[ur]_{c} \\
0 \ar[r] & B \ar[r]_{j} & E_2 \ar[rr]_{\partial} \ar[ur]_{\ker(c)} & & E_1\ar[r]_{p} & C \ar[r] & 0
}	
\end{aligned}
\end{equation}
where $c$ and $d$ are as in (\ref{diag:xmod_gpd}). 

\begin{Definition}
We say that two butterflies $\widehat{F},\widehat{F'}\colon E \to E'$ are isomorphic to each other if there exists an isomorphism $\sigma\colon F \to F'$ such that
\[
\sigma\cdot \iota = \iota',\quad \sigma\cdot\kappa = \kappa', \quad \gamma'\cdot\sigma = \gamma,\quad \delta'\cdot\sigma = \delta.
\]
\end{Definition}
By taking isomorphism classes of butterflies as morphisms between crossed extensions of $C$, we get a category $[\BExt](C)$. From Proposition 6.4 in \cite{categorical_ot} and Proposition 3.14 in \cite{Cat-frac} we obtain the following result.  

\begin{Theorem} \label{thm:BExt}
The category $[\BExt](C)$ is the \emph{category of fractions} of $\XExt(C)$ with respect to $P_C$-vertical maps, by means of a functor $ Q_C\colon \XExt(C) \to [\BExt](C) $ which is the identity on objects.
\end{Theorem}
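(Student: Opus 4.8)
The plan is to exhibit $[\BExt](C)$ as the localization by first reducing to the known calculus of fractions for weak equivalences of crossed modules, and then matching the resulting fractions with isomorphism classes of butterflies. The initial step is to pin down the class $\Sigma$ of $P_C$-vertical maps: as noted before the statement, such a map has the form $(1,f_1,f_2,1)$ and induces a weak equivalence of the associated internal groupoids \eqref{diag:xmod_gpd}. Hence $\Sigma$ is exactly the class of weak equivalences of the underlying crossed modules, restricted to crossed extensions of $C$ with fixed kernel and cokernel, and it is precisely this class that the butterfly formalism is built to invert.

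Next I would read each butterfly as a fraction. Diagram \eqref{diag:bfly_span} presents $\widehat F\colon E\to E'$ as a span of crossed-extension morphisms whose leg $(\pr_1,\delta)$ towards $E$ is a weak equivalence: it restricts to the identity on the kernel $B$ (since $\pr_1\cdot\overline{j}=j$), and, because $(\delta,\iota)$ is short exact, the induced map $\coker(\kappa\sharp\iota)\cong\coker\partial=C$ is an isomorphism. Thus $\widehat F$ is the datum of a roof $E\xleftarrow{\sim}\widetilde E\to E'$ with the weak-equivalence leg pointing to $E$, i.e.\ a representative of a morphism $E\to E'$ in the category of fractions $\XExt(C)[\Sigma^{-1}]$. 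This is the content of Proposition 6.4 in \cite{categorical_ot}, which organizes crossed extensions and butterflies into the bicategory of fractions of $\XExt(C)$ at $\Sigma$, with horizontal composition computed by the pullback-then-cokernel recipe \eqref{diag:butterfly_composition} and identities given by \eqref{diag:id_butterfly}.

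It remains to descend from this bicategory to the ordinary category of fractions. Here I would invoke Proposition 3.14 in \cite{Cat-frac}: taking connected components of the hom-categories of the bicategory of fractions recovers the $1$-categorical localization $\XExt(C)[\Sigma^{-1}]$, and for $\Bfly(C)$ this operation is exactly the passage to isomorphism classes of butterflies that defines $[\BExt](C)$. The induced functor $Q_C$ is then the identity on objects, since neither the embedding into $\Bfly(C)$ nor the quotient by $2$-cells moves objects, and it inverts $\Sigma$ because a weak equivalence, read through \eqref{diag:bfly_span}, carries an evident inverse roof. The main obstacle is the compatibility between the two localization steps: one must check that butterfly composition \eqref{diag:butterfly_composition} genuinely computes composition of roofs and is well defined on isomorphism classes, and that the hom-categories of $\Bfly(C)$ carry butterfly isomorphisms as their only $2$-cells, so that their connected components reproduce $[\BExt](C)$ on the nose. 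Equivalently, in a direct approach one would verify the Ore and cancellation conditions for $\Sigma$ and that the common-refinement equivalence on roofs coincides with butterfly isomorphism; once this bookkeeping is in place, the universal property of the category of fractions follows formally from the two cited propositions.
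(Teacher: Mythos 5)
Your proposal follows essentially the same route as the paper, which derives the theorem directly from Proposition 6.4 of \cite{categorical_ot} (butterflies as the bicategory of fractions of crossed extensions at the $P_C$-vertical weak equivalences) combined with Proposition 3.14 of \cite{Cat-frac} (passing to isomorphism classes of butterflies, i.e.\ connected components of hom-categories, yields the $1$-categorical localization). The additional reading of diagram \eqref{diag:bfly_span} as a roof with weak-equivalence leg towards $E$, and your flagging of the compatibility checks, are consistent with how those cited results are meant to be assembled.
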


\begin{Remark}
A description of $Q_C$ on morphisms can be found in \cite[Section 6.4]{categorical_ot}. Those butterflies representing a class in the image of $Q_C$ are called \emph{representable}.
On the other hand, as proved in \cite{CM16}, those butterflies whose isomorphism class is invertible in $[\BExt](C)$ are precisely the ones where also $(\gamma,\kappa)$ is a short exact sequence. These are called \emph{flippable} butterflies.

Actually, $[\BExt](C)$ is the classifying category of the bicategory of butterflies of crossed extensions of $C$. It was proved in \cite{AMMV13} that butterflies provide a bicategory of fractions of crossed modules with respect to weak equivalences, so that the above theorem can be seen as a shadow of this result. 
\end{Remark}

Theorem \ref{thm:BExt} is important in our context also because it shows that the \emph{category of fractions} of $\XExt(C)$  we need, is still a locally small category. Going back to the functor $P_C\colon\XExt(C)\to\Mod(C)$, recall that it sends $P_C$-vertical maps to isomorphisms. Hence, by the universal property of $Q_C$, we get a factorization of $P_C$ given by
\begin{equation} \label{diag:Pc_bar}
\begin{aligned}
\xymatrix{
\XExt(C) \ar[r]_{Q_C} \ar@/^4ex/[rr]^{P_C} & [\BExt](C) \ar[r]_{\overline{P}_C} & \Mod(C),
}
\end{aligned}
\end{equation}
where $\overline{P}_C$ associates with each isomorphism class of butterflies the $C$-module morphism $\beta$ described in diagram \eqref{diag:bfly_span} which is invariant under isomorphisms (see Theorem 6.6 of \cite{categorical_ot} for details).

\begin{Proposition} \label{prop:fact_gpd}
In the factorization of diagram \eqref{diag:Pc_bar}, $\overline{P}_C$ is an opfibration with groupoidal fibres.
\end{Proposition}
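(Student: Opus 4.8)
The plan is to establish the two claims in turn. Since $Q_C$ is the identity on objects, a morphism in the fibre of $\overline{P}_C$ over a module $(B,\xi)$ is the isomorphism class of a butterfly $\widehat{F}\colon E\to E'$ whose associated morphism $\beta$ of \eqref{diag:bfly_span} equals $\mathrm{id}_B$. By the Remark after \Cref{thm:BExt}, such a class is invertible exactly when the butterfly is flippable, i.e.\ when $(\gamma,\kappa)$ is a short exact sequence as well. I would show that every such \emph{vertical} butterfly is flippable by computing its effect on the homotopy invariants of the underlying crossed modules: on $\pi_1=\ker\partial=B$ the induced map is $\beta$ itself, by the very construction of $\beta$ in \eqref{diag:bfly_span}, whereas on $\pi_0=\coker\partial=C$ it is the identity, since $\delta$ is an epimorphism and $p\cdot\delta=p'\cdot\gamma$. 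A vertical butterfly thus induces $(\mathrm{id}_C,\mathrm{id}_B)$ on $(\pi_0,\pi_1)$, so it is a weak equivalence of crossed modules, hence flippable by the correspondence between flippable butterflies and weak equivalences established in \cite{AMMV13,CM16}. Every fibre arrow is therefore invertible, and the fibres are groupoids.

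For the opfibration property I would prove that $Q_C$ sends $P_C$-cocartesian arrows to $\overline{P}_C$-cocartesian ones; as $P_C$ is an opfibration and $Q_C$ is bijective on objects, this supplies every cocartesian lifting. Fix $E$ over $(B,\xi)$ and $\beta\colon(B,\xi)\to(B',\xi')$, and let $u_\beta\colon E\to\beta_* E$ be the push-forward. Given $\phi\colon E\to E''$ in $[\BExt](C)$ over $h\cdot\beta$, I would use the calculus of fractions behind \Cref{thm:BExt} to write $\phi=Q_C(g)\cdot Q_C(s)^{-1}$ with $s\colon E^\flat\to E$ a vertical weak equivalence and $g\colon E^\flat\to E''$ a morphism of $\XExt(C)$; applying $P_C$ and using $P_C(s)=\mathrm{id}$ gives $P_C(g)=h\cdot\beta$. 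Writing $u_\beta^\flat\colon E^\flat\to\beta_* E^\flat$ for the push-forward at $E^\flat$, cocartesianness yields a unique $\tilde g\colon\beta_* E^\flat\to E''$ over $h$ with $\tilde g\cdot u_\beta^\flat=g$, while the cleavage relation $u_\beta\cdot s=(\beta_* s)\cdot u_\beta^\flat$ makes $\psi:=Q_C(\tilde g)\cdot Q_C(\beta_* s)^{-1}$ a morphism over $h$ satisfying $\psi\cdot Q_C(u_\beta)=\phi$, unique by the uniqueness of $\tilde g$.

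The one point that makes the previous paragraph work is that $\beta_* s$ is again invertible after localization. This I would obtain from the observation already recorded in \Cref{sec:XExt}: any morphism over the identity between crossed extensions sharing the same kernel and the same cokernel $C$ is automatically a weak equivalence. Since $\beta_* s$ is precisely such a morphism in the fibre over $(B',\xi')$, the push-forward functor preserves vertical weak equivalences and hence descends to the localized fibres. In this language the fibre of $\overline{P}_C$ over $(B,\xi)$ is exactly the localization of $\XExt(C)_{(B,\xi)}$ at all of its arrows, which re-proves that it is a groupoid.

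I expect the main obstacle to lie in the opfibration half, and specifically in the bookkeeping of the fraction calculus: one must match the direction in which butterflies represent fractions with the push-forward, and verify that the factorization $\psi$ is genuinely unique in $[\BExt](C)$ and not merely in $\XExt(C)$. Once the compatibility of reindexing with the localization is in hand, this reduces to a diagram chase; the groupoidal-fibres statement, by contrast, is immediate from the homotopy-invariant computation.
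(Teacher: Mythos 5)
Your argument for the groupoidal fibres is correct and takes a genuinely different, more concrete route than the paper: a fibre morphism of $\overline{P}_C$ is a butterfly class with $\beta=\mathrm{id}_B$ which, by $p\cdot\delta=p'\cdot\gamma$, also induces the identity on the common cokernel $C$; it is therefore an isomorphism on both homotopy invariants of the underlying crossed modules, hence a weak equivalence, hence flippable and invertible by the results of \cite{AMMV13} and \cite{CM16}. (The paper instead obtains both claims at once by citing Theorem 4.3 of \cite{Cat-frac}, $Q_C$ being the coinverter of the identee of $P_C$; your computation is what that abstract statement specializes to here. A harmless caveat: by \eqref{eq:-beta} the induced map on kernels is $\beta$ only up to sign, which does not affect invertibility.)

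The opfibration half, however, has a genuine gap exactly where you flag it, and it is not a mere diagram chase. Your construction of $\psi$ proves the \emph{existence} of a factorization of $\phi$ through $Q_C(u_\beta)$, but cocartesianness also requires \emph{uniqueness}, and ``unique by the uniqueness of $\tilde g$'' does not deliver it: a competing $\psi'$ over $h$ with $\psi'\cdot Q_C(u_\beta)=\phi$ need not arise from the particular fraction presentation $(s,g)$ you chose, so the universal property of $u_\beta^\flat$ in $\XExt(C)$ says nothing about it. Unwinding what is actually needed: writing any morphism over $h$ as $w\cdot Q_C(u_h)$ with $w$ a vertical isomorphism (which your own factorization argument supplies), uniqueness reduces to showing that the only vertical automorphism $v$ of $(h\cdot\beta)_*E$ in $[\BExt](C)$ satisfying $v\cdot Q_C(u_{h\cdot\beta})=Q_C(u_{h\cdot\beta})$ is the identity. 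That is a statement about equality of butterfly classes (equivalently, about when two right fractions coincide), and it requires either an explicit computation with the composition \eqref{diag:butterfly_composition} or the $2$-categorical argument of \cite[Theorem 4.3]{Cat-frac} --- which is precisely the result the paper's one-line proof invokes, and which your proposal is in effect attempting to re-prove by hand. Until that cancellation property is established, the proof is incomplete.
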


\begin{proof}
The thesis follows by Theorem 4.3 of \cite{Cat-frac}, since, in 2-categorical terms, $Q_C$ is  the coinverter of the identee of $P_C$.
\end{proof}

Now we are going to show that $\overline{P}_C$ inherits from $P_C$ the property of being cartesian monoidal. As for the preservation of finite products, we need a preliminary Lemma, which follows from results in Section 3.2 of \cite{KLW} (see also \cite{Day}).

\begin{Lemma} \label{lemma:fractions_pres_products}
Let $\cA$ be a category with finite products and $\Sigma$ a class of morphisms in $\cA$ which contains identities and such that, if $f$ and $g$ are in $\Sigma$, $f\times g$  is in $\Sigma $ as well. Then the category of fractions $\cA[\Sigma^{-1}]$ has finite products and the localization functor $Q\colon \cA \to \cA[\Sigma^{-1}]$ preserves them.
\end{Lemma}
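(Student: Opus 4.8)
The plan is to deduce everything from a single structural fact — that forming a category of fractions commutes with finite products of categories — together with the adjoint characterisation of finite products. Since finite products amount to a terminal object and binary products, it suffices to treat the nullary and binary cases.

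First I would record the auxiliary localisation fact. Write $\Sigma\tm\Sigma$ for the class of those morphisms $(s,t)$ of $\cA\tm\cA$ with $s,t\in\Sigma$. Then the functor $Q\tm Q\colon\cA\tm\cA\to\cA[\Sigma^{-1}]\tm\cA[\Sigma^{-1}]$ exhibits its codomain as the category of fractions $(\cA\tm\cA)[(\Sigma\tm\Sigma)^{-1}]$, and the empty product of copies of $\cA$, namely the terminal category $\mathbf 1$, localises to itself. This is exactly the content borrowed from Section 3.2 of \cite{KLW}. The only hypothesis that enters here is that $\Sigma$ contains identities: for a fixed object $A$ the partial morphism $(\id_A,s)$ lies in $\Sigma\tm\Sigma$ whenever $s\in\Sigma$, so any functor out of $\cA\tm\cA$ inverting $\Sigma\tm\Sigma$ inverts $\Sigma$ in each variable separately, hence factors through $Q$ in each variable. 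Gluing these variable-wise factorisations — using that every morphism splits as $(f,g)=(f,\id)\cdot(\id,g)$ — is the routine verification of the universal property, which I would cite from \cite{KLW,Day} rather than reproduce.

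With this identification I would phrase the existence of finite products adjointly: a category $\cC$ has binary products exactly when the diagonal $\Delta_\cC\colon\cC\to\cC\tm\cC$ has a right adjoint, and a terminal object exactly when the unique functor $!_\cC\colon\cC\to\mathbf 1$ has one. In $\cA$ these right adjoints are the product functor $m=\tm\colon\cA\tm\cA\to\cA$ and the functor $t\colon\mathbf 1\to\cA$ picking out the terminal object. The two closure hypotheses on $\Sigma$ are precisely what make these adjoint pairs descend: the diagonal sends $s\in\Sigma$ to $(s,s)\in\Sigma\tm\Sigma$, while $m$ sends $(f,g)\in\Sigma\tm\Sigma$ to $f\tm g\in\Sigma$ by assumption, and the terminal functors are handled trivially. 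I would then invoke a descent principle for adjunctions along localisations: if $F\dashv G$ with $F$ carrying the class inverted in its source into the class inverted in its target, and $G$ likewise, then the induced functors satisfy $\overline F\dashv\overline G$. This follows from the universal property of $Q$ on $2$-cells, since the unit and counit whisker with the localisation functors to natural transformations between $\overline G\,\overline F\,Q$ and $Q$ (respectively $\overline F\,\overline G\,Q'$ and $Q'$), which correspond uniquely to a unit and counit downstairs, the triangle identities descending by faithfulness. Applying this to $\Delta_\cA\dashv m$ and to $!_\cA\dashv t$, and transporting along the isomorphism of the previous paragraph that turns $\overline{\Delta_\cA}$ into the genuine diagonal of $\cA[\Sigma^{-1}]$, produces right adjoints to the diagonal and to the terminal functor, so $\cA[\Sigma^{-1}]$ has finite products. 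Since the counit of $\Delta_\cA\dashv m$ at $(A,B)$ is the pair of projections $(\pr_1,\pr_2)$, its descent exhibits $Q(A\tm B)$, with projections $Q(\pr_1),Q(\pr_2)$, as the product of $QA$ and $QB$; this is exactly the assertion that $Q$ preserves finite products.

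The genuinely substantial point is the auxiliary fact of the second paragraph — that the category of fractions of a finite product of categories is the product of the categories of fractions; everything afterwards is formal $2$-categorical bookkeeping. I expect the main obstacle to be the well-definedness there: checking that the variable-wise factorisations assemble into a single coherent functor on $(\cA\tm\cA)[(\Sigma\tm\Sigma)^{-1}]$, which is where the hypothesis that $\Sigma$ contains identities is indispensable, and for which I would rely on \cite{KLW,Day} rather than carry out the construction by hand.
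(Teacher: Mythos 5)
Your argument is correct, but note that the paper offers no proof of this lemma at all: it is stated as a consequence of Section~3.2 of \cite{KLW} (see also \cite{Day}), so there is no in-text argument to compare against. Your reconstruction is a clean and standard way to obtain the result: you reduce finite products to the adjunctions $\Delta_\cA\dashv\times$ and $!_\cA\dashv t$, observe that the two closure hypotheses on $\Sigma$ are exactly what is needed for both functors in each pair to respect the classes being inverted ($\Delta$ sends $s$ to $(s,s)$, the product functor sends $(f,g)$ to $f\times g\in\Sigma$), and then descend the adjunctions along the localizations using the $2$-dimensional universal property of categories of fractions. The identification of $Q(A\times B)$, with projections $Q(\pr_1),Q(\pr_2)$, as the product of $QA$ and $QB$ via the descended counit is exactly the preservation statement, and since $Q$ is bijective on objects this yields products for all pairs in $\cA[\Sigma^{-1}]$. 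The one genuinely substantive input, namely that $(\cA\times\cA)[(\Sigma\times\Sigma)^{-1}]\simeq\cA[\Sigma^{-1}]\times\cA[\Sigma^{-1}]$ (where ``$\Sigma$ contains identities'' is indispensable, via the factorization $(f,g)=(f,\id)\cdot(\id,g)$), you correctly isolate and defer to \cite{KLW,Day} --- which is precisely what the paper itself does for the whole lemma, so no ground is lost there. If you wanted a self-contained proof you would have to carry out that verification (most easily by currying a $\Sigma\times\Sigma$-inverting functor $\cA\times\cA\to\cC$ to a $\Sigma$-inverting functor $\cA\to[\cA[\Sigma^{-1}],\cC]$), but as a proposal matching the paper's level of detail this is entirely adequate.
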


As an application, we get the following general result.

\begin{Proposition} \label{prop:fractions_cmo}
Let \cX\ and \cB\ be categories  with finite products, and $P\colon \cX \to \cB$ an opfibration strictly preserving them. Consider the factorization
\[
\xymatrix@!=5ex{
\cX \ar@/^3ex/[rr]^{P} \ar[r]_-{Q} & \cQ \ar[r]_-{\overline{P}} & \cB
}
\]
of $P$ through  the category of fractions $(Q,\cQ)$, with respect to the class of $P$-vertical morphisms. Then $\overline{P}\colon \cQ \to \cB$ is a cartesian monoidal opfibration with groupoidal fibres.

If moreover $\cB$ is additive, each fibre of $\overline P$ is endowed with a symmetric $2$-group structure, and  change-of-base functors are symmetric monoidal.
\end{Proposition}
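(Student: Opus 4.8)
The plan is to assemble the statement from the results already available, the one new ingredient being the interaction between finite products and localization. Concretely, I would first show that $\cQ$ has finite products which $\overline P$ strictly preserves, then that $\overline P$ is an opfibration with groupoidal fibres, at which point \Cref{prop:cmo_groupoidal} upgrades it to a cartesian monoidal opfibration; the additive case then follows from the symmetric monoidal structures recalled before \Cref{thm:fibres_groupoids} together with \Cref{thm:fibres_groupoids} itself.

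For the products, I would apply \Cref{lemma:fractions_pres_products} to the class $\Sigma$ of $P$-vertical morphisms. This class contains all identities, as $P$ preserves them, and it is closed under binary products: if $f,g$ are $P$-vertical then, since $P$ strictly preserves products, $P(f\times g)=Pf\times Pg=\id\times\id=\id$, so $f\times g$ is $P$-vertical. Hence $\cQ$ has finite products and $Q$ preserves them; choosing the product of two objects of $\cQ$ to be the image under $Q$ of their product in $\cX$ (recall that $Q$ is the identity on objects) makes $Q$ strictly product preserving. Combining this with $\overline P\cdot Q=P$ and the strictness of $P$, and using the universal property of the localization---two functors $\cQ\times\cQ\to\cB$ that agree after precomposition with $Q\times Q$ coincide---I would conclude that $\overline P$ strictly preserves finite products.

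That $\overline P$ is an opfibration with groupoidal fibres is the heart of the matter, and I expect it to be the main obstacle. Here I would argue exactly as in \Cref{prop:fact_gpd}, which is nothing but the instance $P=P_C$ of the present situation: in $2$-categorical terms $Q$ is the coinverter of the identee of $P$, so Theorem 4.3 of \cite{Cat-frac} applies to the general opfibration $P$ and yields an opfibration $\overline P$ whose fibre over an object $b$ is the localization of the fibre $\cX_b$ at all of its morphisms---and is therefore a groupoid, every morphism of a fibre being $P$-vertical. The subtle point, which is precisely what the $2$-categorical machinery of \cite{Cat-frac} controls, is that cocartesian liftings survive the passage to fractions and that the fibres localize in this way; by contrast the strict preservation of products established above is purely formal. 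With groupoidal fibres and strict product preservation in hand, \Cref{prop:cmo_groupoidal} makes $\overline P$ a cartesian monoidal opfibration.

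For the additive case I would invoke the result of \cite{MV2020} recalled before \Cref{thm:fibres_groupoids}: over an additive base, a cartesian monoidal opfibration induces canonical symmetric monoidal structures on its fibres and renders the change-of-base functors symmetric monoidal. Applying this to $\overline P$ provides both the symmetric monoidal structures on the fibres and the symmetric monoidality of the change-of-base functors. Finally, since the fibres of $\overline P$ are groupoids, \Cref{thm:fibres_groupoids} promotes each of these symmetric monoidal fibres to a symmetric $2$-group, which completes the argument.
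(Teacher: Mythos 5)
Your proposal is correct and follows essentially the same route as the paper's proof: Lemma \ref{lemma:fractions_pres_products} applied to the class of $P$-vertical morphisms (closed under products by strict product preservation of $P$), then Theorem 4.3 of \cite{Cat-frac} for the opfibration with groupoidal fibres, \Cref{prop:cmo_groupoidal} for cartesian monoidality, and \Cref{thm:fibres_groupoids} (with the result of \cite{MV2020}) for the additive case. You merely spell out the verifications in more detail than the paper does.
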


\begin{proof}
Lemma \ref{lemma:fractions_pres_products} is applicable, since $P$-vertical morphisms are closed under finite products. Hence, finite products in \cX\ serve also as finite products in \cQ. So $\overline{P}$ preserves finite products as soon as $P$ does. Then by Theorem 4.3 in \cite{Cat-frac} and \Cref{prop:cmo_groupoidal}, $\overline{P}$ is a cartesian monoidal opfibration with groupoidal fibres. Finally, when $\cB$ is additive, the  result follows from \Cref{thm:fibres_groupoids}.
\end{proof}

As observed in \Cref{sec:XExt}, $P_C\colon\XExt(C)\to\Mod(C)$ is a cartesian monoidal opfibration, hence we can apply \Cref{prop:fractions_cmo} to get that ${\overline{P}_C}: [\BExt](C) \to \Mod(C)$ is a cartesian monoidal opfibration with groupoidal fibres. Moreover, the codomain $\Mod(C)$ is an additive category, so by \Cref{thm:fibres_groupoids} each fibre of $\overline{P}_C$ becomes a symmetric 2-group, which can be considered as \emph{the third cohomology 2-group} of $C$ with coefficients in $(B,\xi)$ and denoted by ${\cH^3(C,B,\xi)}$.

It is easy to see that if one considers connected components, the tensor operation induces precisely the Baer sum, which makes $\pi_0(\cH^3(C,B,\xi))$ a group (isomorphic to $H^3(C,B,\xi)$).

As a matter of fact, for a given crossed extension
\[
\xymatrix{
E\colon & 0 \ar[r]
& B \ar[r]^{j}
& E_2 \ar[r]^{\partial}
& E_1 \ar[r]^{p}
& C \ar[r]
& 0,
}
\]
any representative of the inverse of the class $[E]$ in $\pi_0(\cH^3(C,B,\xi))$ is a pseudo-inverse of $E$ in $\cH^3(C,B,\xi)$. For example, one can take
\[
\xymatrix{
E^*\colon & 0 \ar[r]
& B \ar[r]^{-j}
& E_2 \ar[r]^{\partial}
& E_1 \ar[r]^{p}
& C \ar[r]
& 0\,.
}
\]
To the reader's convenience, we provide here a proof of this fact. First, we have to compute the tensor product $E\otimes E^*$  of the two crossed extensions above, then find a flippable butterfly in the fibre over $(B,\xi)$ between $E\otimes E^*$ and the unit object $I_\xi $:
\[
\xymatrix{
0 \ar[r] & B \ar@{=}[r] & B \ar[r]^0 & C \ar@{=}[r] & C \ar[r] & 0
}
\]
of ${\cH^3(C,B,\xi)}$. This process is summarized in the diagram below:

\smallskip
\[
\xymatrix@C=5ex{
0 \ar[r] & B\times B \ar[d]_{[1,1]} \ar[r]^-{j\times(-j)} \ar@{}[dr]|{\text{p.f.}} & E_2 \times E_2 \ar[d]^{f_2} \ar[rr]^-{\partial \times \partial} \ar@/^/@{-->}[ddr]^(.4){\varphi} & & E_1 \times_C E_1 \ar[r]^-{p\cdot \pr_2} \ar@{=}[d] & C \ar[r] \ar@{=}[d] & 0 \\
0 \ar[r] & B \ar[r]^-{j'} \ar@{==}[dd] \ar@/_/@{-->}[drr]_(.4){\ker(c)\cdot(-j)} & \Ker(p\cdot c) \ar[rr]^{\partial'} \ar[dr]|{\ker(p\cdot c)} & & E_1 \times_C E_1 \ar[r]^-{p\cdot \pr_2} & C \ar@{=}[dd] \ar[r] & 0 \\
& & & E_2 \rtimes E_1 \ar[dr]^{p\cdot c} \ar[ur]_{\langle c,d \rangle} \\
0 \ar[r] & B \ar@{=}[r] & B \ar[rr]^{0} \ar[ur]^{\ker(c)\cdot j} & & C \ar@{=}[r] & C \ar[r] & 0
}
\]

\smallskip
\noindent The arrow $\varphi\colon E_2 \times E_2 \to E_2 \rtimes E_1$ is the cooperator of $\ker(d)\colon E_2\to E_2 \rtimes E_1$ with $\ker(c)\colon E_2\to E_2 \rtimes E_1$, where $d$ and $c$ are as in diagram \eqref{diag:xmod_gpd}. More explicitly, $\varphi(x_1,x_2)=(-x_1+{\partial x_1}\ast x_2,\partial x_1)$. It is easy to check that $\varphi\cdot(j\times(-j))=\ker(c)\cdot(-j)\cdot[1,1]$. Moreover, both the dashed arrows factor uniquely through the kernel of $p\cdot c$, hence we get the upper left commutative diagram.
Now consider the commutative diagram
\[
\xymatrix{
B \ar@{=}[d] \ar[rr]^-{j'} & & \Ker(p\cdot c) \ar@{}[dr]|{(a)} \ar[d]|{\ker(p\cdot c)} \ar[r]^{\overline{\partial}} & \Ker(p\cdot \pr_2) \ar@{}[dr]|{(b)} \ar[d]|{\ker(p\cdot \pr_2)} \ar[r] & 0 \ar[d] \\
B \ar[rr]^-{\ker(c)\cdot (-j)} & & E_2 \rtimes E_1 \ar[r]^-{\langle c,d \rangle} & E_1 \times_C E_1 \ar[r]^-{p\cdot \pr_2} & C.
}
\]
Both squares $(b)$ and $(a)+(b)$ are pullback, so by cancellation $(a)$ is a pullback. Hence, since $\ker(c)\cdot (-j)$ is a kernel of $\langle c,d \rangle$, $j'$ is a kernel of $\overline{\partial}$. Notice that $\langle c,d \rangle$ is a regular epimorphism, since it is the comparison arrow of the internal groupoid $E_2 \rtimes E_1$ to its support $E_1 \times_C E_1$ (i.e.\ the kernel pair of the coequalizer $p$ of $d$ and $c$). Let us denote $\partial'=\langle c,d \rangle\cdot \ker(p\cdot c)$. One can check that $\ker(p\cdot \pr_2)\cdot\overline{\partial}\cdot f_2 = \langle c,d \rangle\cdot\varphi = \partial\times\partial$. But $\partial\times\partial = \ker(p\cdot\pr_2)\cdot\overline{\partial\times\partial}$, where $\overline{\partial\times\partial}$ is a cokernel of $j\times(-j)$, so that $\overline{\partial}\cdot f_2 = \overline{\partial\times\partial}$.

The observations above explain that the commutative square  $f_2\cdot (j\times(-j)) = j'\cdot[1,1]$ induces isomorphisms on kernels and cokernels of the horizontal arrows, hence it is a push forward, thanks to \cite[Theorem 2.13]{pf}. As a consequence, by construction $(p\cdot \pr_2,\partial',j')$ is the tensor product $E\otimes E^*$. Furthermore, we obtain an isomorphism, represented by the flippable butterfly $\widehat{E_2 \rtimes E_1}$, between $E\otimes E^*$ and the unit object $I_\xi$.
\medskip

As recalled in the introduction, associated with any 2-group $\cG$, beside $\pi_0(\cG)$ there is also the abelian group $\pi_1(\cG)$ of automorphisms of its unit object. Let us investigate it in our context.

It was proved in \cite{CM16} that elements in $\pi_1(\cH^3(C,B,\xi))$ are exactly (isomorphism classes) of  butterflies of the form:
\begin{equation}\label{diag:butt_ext}	
\begin{aligned}
\xymatrix{
0\ar[r]
& B\ar@{=}[r]\ar@{==}[dd]
& B\ar[rr]^{0}\ar[dr]_{-\kappa}
&
& C\ar@{=}[r]
& C\ar@{=}[dd]\ar[r]
& 0
\\
& & & E \ar[dr]^{\gamma} \ar[ur]_{\gamma}& & &\\
0\ar[r]
& B\ar@{=}[r]
& B\ar[rr]_{0}\ar[ur]^{\kappa}
&
& C\ar@{=}[r]
& C\ar[r]
& 0
}
\end{aligned}
\end{equation}
As observed in \cite{CM16}, both $\pi_1(\cH^3(C,B,\xi))$ and $\pi_0(\cH^2(C,B,\xi))$ are isomorphic to the abelian group $H^2(C,B,\xi)$. More is true:  by defining $\Phi(\kappa,\gamma)$ as the butterfly in \eqref{diag:butt_ext}, one gets a monoidal equivalence 
\[\Phi\colon \cH^2(C,B,\xi)\to  \mathsf{Eq}(I_\xi)\,,\] 
 where $\mathsf{Eq}(I_\xi)$ is the $2$-group with objects the butterflies as in \eqref{diag:butt_ext}, and arrows the isomorphisms of such butterflies, with tensor product given by butterfly composition. Indeed, since $\pi_0(\mathsf{Eq}(I_\xi))$ is isomorphic to $\pi_1(\cH^3(C,B,\xi))$, $\pi_0(\Phi)$ makes the latter isomorphic to $\pi_0(\cH^2(C,B,\xi))$, as recalled above.

\begin{Theorem}\label{thm:I}
	Given a $C$-module $B$ with action $\xi$, the assignment $\Phi$ gives rise to a symmetric strict monoidal equivalence between the symmetric $2$-groups $\cH^2(C,B,\xi)$ and $\mathsf{Eq}(I_\xi)$. 
\end{Theorem}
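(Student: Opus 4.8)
The plan is to prove the statement by checking separately that $\Phi$ is an equivalence of underlying categories and that it is strictly compatible with both the tensor and the symmetry. First I would pin down $\Phi$ as a functor: on objects it sends an abelian extension $(E,\kappa,\gamma)$ of $\cH^2(C,B,\xi)=\OpExt(C,B,\xi)$ to the self-butterfly of $I_\xi$ displayed in \eqref{diag:butt_ext}, and on a morphism of extensions $f\colon E\to E'$ (necessarily an isomorphism, the fibre being a groupoid) it sends $f$ to the butterfly isomorphism given by the same underlying map $f$. Functoriality is immediate, since the butterfly-isomorphism datum is literally $f$. For full faithfulness I would observe that a butterfly isomorphism $\Phi(E)\to\Phi(E')$ is exactly an isomorphism $\sigma\colon E\to E'$ intertwining $(\kappa,\gamma)$ with $(\kappa',\gamma')$, i.e.\ precisely a morphism of extensions; hence $\Phi$ is a bijection on hom-sets. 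For essential surjectivity I would note that every object of $\mathsf{Eq}(I_\xi)$ is a flippable self-butterfly of $I_\xi$, whose middle object together with its short exact sequence $(\iota,\gamma)$ is an abelian extension with induced action $\xi$, and this extension is sent by $\Phi$ to the given butterfly. Thus $\Phi$ is an equivalence.

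Next I would establish strict monoidality. Recall that the tensor of $\cH^2(C,B,\xi)$ is the classical Baer-sum construction, obtained as the push forward along the codiagonal $+\colon B\times B\to B$ of the product extension $E\times_C E'$; concretely $E\otimes E'=(E\times_C E')/N$ with $N=\{(\kappa b,-\kappa' b)\}$ the antidiagonal copy of $B$. The tensor of $\mathsf{Eq}(I_\xi)$ is butterfly composition as in \eqref{diag:butterfly_composition}. The heart of the proof is the verification that these two constructions agree on the nose: composing $\Phi(E)$ with $\Phi(E')$ forms the same pullback $E\times_C E'$ and the same cokernel, since the cooperator $\langle\iota,\kappa'\rangle$ of \eqref{diag:butterfly_composition} is exactly $\langle\kappa,-\kappa'\rangle$ and so has image $N$, while the induced legs $q\langle\kappa,0\rangle$, $q\langle0,\iota'\rangle$, $\overline{\delta\,p_1}$, $\overline{\gamma'\,p_2}$ reproduce the kernel inclusion and cokernel projection of the Baer sum. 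This yields $\Phi(E\otimes E')=\Phi(E)\cdot\Phi(E')$. For the unit I would check that $\Phi(E_\rtimes)$ coincides with the identity butterfly \eqref{diag:id_butterfly} over $I_\xi$: both have middle $B\rtimes_\xi C$, and the legs match once the sign $-\kappa$ prescribed in \eqref{diag:butt_ext}---forced, as in \eqref{eq:-beta}, so that the associated module map is $+\id$---is taken into account. Since the comparison maps are identities, the associativity and unit constraints are transported identically, so $\Phi$ is a \emph{strict} monoidal functor.

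Finally I would treat the symmetry. The braiding on $\cH^2(C,B,\xi)$ is induced by the swap $E\times_C E'\cong E'\times_C E$ descending to the Baer quotient, whereas on $\mathsf{Eq}(I_\xi)$ it is the symmetry carried by $\pi_1$ of the ambient symmetric $2$-group $\cH^3(C,B,\xi)$. Under the identification of middle objects obtained in the previous step, I would check that $\Phi$ carries the former to the latter, so that $\Phi$ is symmetric; being strict, it preserves the symmetry on the nose. I expect the genuine obstacle to lie in the strictness of the monoidal comparison, and specifically in the sign bookkeeping: making $\Phi(E_\rtimes)$ equal---rather than merely isomorphic---to the identity butterfly, and ensuring that the relative sign between the two legs of a butterfly of type \eqref{diag:butt_ext} is preserved under the composition \eqref{diag:butterfly_composition}. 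The $-\kappa$ of \eqref{diag:butt_ext} is precisely the device reconciling these signs, mirroring the $-\beta$ of \eqref{eq:-beta} and the antidiagonal in \eqref{diag:bfly_span}; once it is propagated correctly through the cokernel $\langle\iota,\kappa'\rangle$, associativity and symmetry follow formally from the universal properties of the pullbacks and cokernels involved.
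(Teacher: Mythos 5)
Your proposal is correct and follows essentially the same route as the paper: the core step in both is the identification of the composite butterfly $\Phi(E')\cdot\Phi(E)$ with the Baer sum, by recognizing the cooperator $\langle\iota,\kappa'\rangle=\langle\kappa,-\kappa'\rangle$ and showing that its cokernel realizes the push forward of $\kappa\times_C\kappa'$ along the codiagonal (the paper isolates this as \Cref{lemma:pf_and_cok} and proves it internally via the jointly epimorphic pair $\langle0,1\rangle,\langle1,-1\rangle$ rather than by your image computation, which is valid in groups but not verbatim in a general semi-abelian setting). Your additional explicit verification that $\Phi$ is an equivalence of underlying groupoids is a routine complement to what the paper delegates to \cite{CM16}.
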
 
\begin{proof}
Let us recall the construction of the Baer sum in $\cH^2(C,B,\xi)$ of two abelian extensions 
\[
E\colon \xymatrix{
B\ar[r]^{\kappa}&E\ar[r]^{\gamma}&C
}
\qquad 
E'\colon \xymatrix{
B\ar[r]^{\kappa'}&E'\ar[r]^{\gamma'}&C
}
\]
of cokernel $C$ and abelian kernel $B$. In order to get their Baer sum $E\oplus E'$ (displayed in the left-most vertical sequence of maps, diagram (\ref{diag:Baer_of_ext}) below)

\begin{equation}\label{diag:Baer_of_ext}	
\begin{aligned}
\xymatrix@C=12ex{
B\ar[d]_{\bar \kappa}\ar@{}[dr]|{(a)}
&B\times B\ar[l]_{[1,1]}\ar@{=}[r]\ar[d]^{\kappa\times_C \kappa'}
&B\times B\ar[d]^{\kappa\times \kappa'}
\\
E\oplus E'\ar[d]_{\bar\gamma}
& E\times_{C}E'\ar[l]_p \ar[r]^{\langle p_1,p_2\rangle}\ar[d]_r\ar@{}[dr]|{(b)}
& E\times E'\ar[d]^{\gamma\times \gamma'}
\\
C\ar@{=}[r]
&C\ar[r]_-{\langle1,1\rangle}
& C\times C
}	
\end{aligned}
\end{equation}
first take the pullback $(b)$ of $\gamma \times \gamma'$ along the diagonal map $\langle1,1\rangle$ of $C$ (i.e., take the pullback of $\gamma$ along $\gamma'$), then take the pushforward $(a)$ of the kernel $\kappa\times_C\kappa'$ along the codiagonal $[1,1]$ of the abelian object $B$. The short exact sequence $(\bar\kappa,\bar\gamma)$ is the Baer sum of $(\kappa,\gamma)$ and $(\kappa',\gamma')$.

On the other hand, if we consider $\Phi(E)$ and $\Phi(E')$ (as in  diagram (\ref{diag:butt_ext})), and we take their composite as butterflies, we get the following construction: 
\begin{equation}\label{diag:Butter_of_ext}	
\begin{aligned}
\xymatrix{
&& B\ar[ddl]_{\langle-\kappa,0\rangle} \ar@{->}[rr]_{0} \ar[dr]_{-\kappa} & & C  \\
&& & E \ar[dr]_{\gamma} \ar[ur]_{\gamma} \\
E'\cdot E\ar@/^10ex/[rrrruu]^{\overline{\gamma\cdot p_1}}  \ar@/_10ex/[rrrrdd]_{\overline{\gamma'\cdot p_2}}
&\ar[l]_{q}E\times_{C}E' \ar[drr]_{p_2}\ar[urr]^{p_1}  & \ar[l]_(.3){\langle\kappa, -\kappa' \rangle} B \ar@{->}[rr]_{0}\ar[dr]^{-\kappa'} \ar[ur]_{\kappa} & & C\\
&& & E' \ar[dr]_{\gamma'} \ar[ur]_{\gamma'} \\
&& B \ar[uul]^{\langle0,\kappa'\rangle} \ar@{->}[rr]^{0} \ar[ur]_{\kappa'} & & C 
}	
\end{aligned}
\end{equation}
Now, thanks to  \Cref{lemma:pf_and_cok} below,  the composite $q\cdot \langle0,\kappa'\rangle$    in  diagram (\ref{diag:Butter_of_ext}) is actually a pushforward of $\kappa\times_C\kappa'$ along $[1,1]$. As a consequence, the short exact sequence $(\bar\kappa,\bar\gamma)$ of diagram (\ref{diag:Baer_of_ext}) coincides with the short exact sequence $(q\cdot\langle0,\kappa'\rangle,\overline{\gamma \cdot p_1})$ of diagram  4(\ref{diag:Butter_of_ext}). 
Therefore, the identity $\Phi(E\oplus E')= \Phi(E')\cdot  \Phi(E)$ holds.

As far as the monoidal units are concerned, it is clear that the image of the unit object of $\cH^2(C,B,\xi)$   
\[
E_{\rtimes}\colon \xymatrix{B\ar[r]^-{i_B}&B\rtimes C\ar[r]^-{p_C}&C}
\]
under $\Phi$ is given by the identity butterfly of the crossed extension $I_{\xi}=(1_B,0,1_C)$:
\[
\xymatrix{
0 \ar[r] & B\ar@{==}[dd]  \ar@{=}[r] & B \ar[rr]^{0} \ar[dr]_{-i_B} & &  C\ar@{=}[r] & C \ar[r]\ar@{=}[dd] & 0 \\
& & & B\rtimes C \ar[dr]_{p_C} \ar[ur]_{p_C} \\
0 \ar[r] & B \ar@{=}[r] & B \ar[rr]_{0} \ar[ur]_{i_B} & & C\ar@{=}[r] & C \ar[r] & 0
}	
\]

\medskip

Finally, by means of the universal properties used in the composition of butterflies, one easily determines a canonical natural isomorphism 
\[
\sigma_{E,E'}\colon \Phi(E\oplus E')= \Phi(E')\cdot\Phi(E)\cong\Phi(E)\cdot\Phi(E')=\Phi(E'\oplus E)
\]
which makes $\Phi$ symmetric monoidal.
\end{proof}

\bigskip
\begin{Lemma}\label{lemma:pf_and_cok}
The square of solid arrows in diagram (\ref{diag:lemma}) below presents $q\cdot\langle0,\kappa'\rangle$ as the pushforward of $\kappa\times_C \kappa'$ along $[1,1]$. 
\begin{equation}\label{diag:lemma}
\begin{aligned}
\xymatrix@C=10ex{
B\ar[d]_{q\cdot\langle0,\kappa'\rangle }
&B\times B\ar[l]_-{[1,1]}\ar[d]^{\kappa\times_C\kappa'}
\\
E\cdot E'\ar@{-->}[d]_{\overline{\gamma \cdot p_1}}
&E\times_C E'\ar[l]_{q}\ar@{-->}[d]^r
\\
C\ar@{==}[r]&C
}
\end{aligned}
\end{equation}

\begin{proof} First we prove that the candidate pushforward square is commutative. This can be done by precomposing with the jointly epimorphic pair
\[
\xymatrix@C=10ex{
B\ar[r]^-{\langle0,1\rangle}
&B\times B
&B\ar[l]_-{\langle1,-1\rangle}\,.
}
\]
Indeed, 
\[
q\cdot\langle0,\kappa'\rangle\cdot[1,1]\cdot\langle0,1\rangle=q\cdot\langle0,\kappa'\rangle=q\cdot(\kappa\times_C\kappa')\cdot\langle0,1\rangle\,,
\]
\[
q\cdot\langle0,\kappa'\rangle\cdot[1,1]\cdot\langle1,-1\rangle=0=q\cdot\langle\kappa,-\kappa'\rangle
=q\cdot(\kappa\times_C\kappa')\cdot\langle1,-1\rangle\,.
\]

\bigskip
\noindent As a second step we prove that the dashed square  commutes. To this end, let us consider the square $(b)$ of diagram (\ref{diag:Baer_of_ext}). One has: 
\[\langle1,1\rangle\cdot r=\langle \gamma\cdot p_1,\gamma'\cdot p_2\rangle=\langle \gamma\cdot p_1,\gamma\cdot p_1\rangle=\langle1,1\rangle\cdot\gamma \cdot p_1=\langle1,1\rangle\cdot\overline{\gamma \cdot p_1}\cdot q
\]
Canceling  the monomorphism $\langle1,1\rangle$ on both sides, one obtains $r= \overline{\gamma \cdot p_1}\cdot q$.

Summarizing, diagram (\ref{diag:lemma}) shows that the pair $([1,1],q)$ is a morphism of crossed modules between normal monomorphisms with isomorphic cokernels, hence a pushforward, thanks to \cite[Theorem 2.13]{pf}.

\end{proof}
	
\end{Lemma}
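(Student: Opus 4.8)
The plan is to recognize diagram \eqref{diag:lemma} as an instance of the pushforward criterion of \cite[Theorem 2.13]{pf}: a morphism between two normal monomorphisms (equivalently, between two short exact sequences) that induces an isomorphism on cokernels is automatically a pushforward square. Concretely, I would read the outer rectangle of \eqref{diag:lemma} as the morphism $([1,1],q)$ from the short exact sequence with kernel $\kappa\times_C\kappa'$ and cokernel $r$ (the middle column of \eqref{diag:Baer_of_ext}) to the short exact sequence with kernel $q\cdot\langle0,\kappa'\rangle$ and cokernel $\overline{\gamma\cdot p_1}$ (the one produced by the butterfly composition in \eqref{diag:Butter_of_ext}). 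Both cokernels land in $C$, and the map they induce on cokernels is $1_C$; so once the two squares of \eqref{diag:lemma} are shown to commute, the criterion delivers the claim at once.

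First I would verify commutativity of the upper (solid) square, namely $q\cdot\langle0,\kappa'\rangle\cdot[1,1]=q\cdot(\kappa\times_C\kappa')$. Rather than computing directly, I would test both sides against the jointly epimorphic pair $\langle0,1\rangle,\langle1,-1\rangle\colon B\to B\times B$. Precomposing with $\langle0,1\rangle$, the identity $[1,1]\cdot\langle0,1\rangle=1$ collapses both sides to $q\cdot\langle0,\kappa'\rangle$; precomposing with $\langle1,-1\rangle$, the left side vanishes because $[1,1]\cdot\langle1,-1\rangle=0$, while the right side becomes $q\cdot\langle\kappa,-\kappa'\rangle=0$ since $q$ is by construction the cokernel of $\langle\kappa,-\kappa'\rangle$. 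This second reduction is exactly where the defining property of $q$ enters, and it is the crux of the commutativity.

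Next I would record that the two vertical legs are normal monomorphisms with cokernel $C$: the map $\kappa\times_C\kappa'$ is the kernel of $r$ in \eqref{diag:Baer_of_ext}, and $q\cdot\langle0,\kappa'\rangle$ is the kernel of $\overline{\gamma\cdot p_1}$ by the very definition of the composite butterfly recalled before \eqref{diag:Butter_of_ext}. It then remains to check the lower (dashed) square $r=\overline{\gamma\cdot p_1}\cdot q$, i.e.\ that $q$ respects the two projections onto $C$. Here I would invoke the pullback square $(b)$ of \eqref{diag:Baer_of_ext}: composing with the diagonal monomorphism $\langle1,1\rangle\colon C\to C\times C$ gives $\langle1,1\rangle\cdot r=\langle\gamma\cdot p_1,\gamma'\cdot p_2\rangle=\langle1,1\rangle\cdot\gamma\cdot p_1=\langle1,1\rangle\cdot\overline{\gamma\cdot p_1}\cdot q$, the middle equality holding because $\gamma\cdot p_1=\gamma'\cdot p_2$ on the pullback $E\times_C E'$. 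Cancelling the monomorphism $\langle1,1\rangle$ delivers the required identity.

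With both squares commuting, both vertical arrows normal monomorphisms, and the induced map on cokernels equal to the isomorphism $1_C$, \cite[Theorem 2.13]{pf} lets me conclude that the solid square exhibits $q\cdot\langle0,\kappa'\rangle$ as the pushforward of $\kappa\times_C\kappa'$ along $[1,1]$. I expect the only genuinely delicate point to be the commutativity of the upper square: everything there hinges on the fact that $q$ kills $\langle\kappa,-\kappa'\rangle$, which is why the argument via the jointly epimorphic pair $\langle0,1\rangle,\langle1,-1\rangle$ is the efficient route; the identification of the vertical maps as kernels with cokernel $C$ and the commutativity of the lower square are then formal consequences of the butterfly-composition data and of the pullback $(b)$.
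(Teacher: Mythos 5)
Your proposal is correct and follows essentially the same route as the paper's own proof: commutativity of the solid square via the jointly epimorphic pair $\langle0,1\rangle,\langle1,-1\rangle$ (with the key point that $q$ coequalizes $\langle\kappa,-\kappa'\rangle$ with zero), commutativity of the dashed square by cancelling the monomorphism $\langle1,1\rangle$ against the pullback $(b)$, and the conclusion via \cite[Theorem 2.13]{pf} applied to the morphism $([1,1],q)$ of normal monomorphisms inducing the identity on cokernels. The extra details you supply (why $q\cdot\langle\kappa,-\kappa'\rangle=0$, why $\gamma\cdot p_1=\gamma'\cdot p_2$) are exactly the ones the paper leaves implicit.
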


%\subsection*{Acknowledgment}

%Partial financial support was received from INdAM -- Istituto Nazionale di Alta Matematica ``Francesco Severi'' -- Gruppo Nazionale per le Strutture Algebriche, Geometriche e le loro Applicazioni.

% ------------------------------------------------------------------------

\begin{thebibliography}{99}

\bibitem{AMMV13} O.~Abbad, S.~Mantovani, G.~Metere, and E.~M.~Vitale, Butterflies in a semi-abelian context, \emph{Adv.~Math.} 238 (2013) 140--183.

\bibitem{butterflies} E. Aldrovandi and B. Noohi, Butterflies I: Morphisms of 2-group stacks, \emph{Adv.~Math.} 221 (2009) 687--773.

\bibitem{BL2004} J. Baez and A. Lauda. Higher-dimensional algebra V: 2-Groups, \emph{Theory Appl.~Categ.} 12 (2004).

%\bibitem{Borceux1994} F.~Borceux. Handbook of Categorical Algebra, vol. 2. \emph{Encyclopedia of Mathematics and its Applications} 50, Cambridge University Press (1994).


\bibitem{Beck} J. M. Beck, Triples, algebras and cohomology, \emph{Ph.D. Thesis} at  Columbia University (1967). Republished in: Reprints in Theory and Applications of Categories, No. 2 (2003).

\bibitem{BB} F. Borceux and D. Bourn, \emph{Mal'cev, protomodular, homological and semi-abelian categories}, Math. Appl., vol. 566, Kluwer Acad. Publ., 2004.

\bibitem{Bourn99} D.~Bourn, Baer sums and fibred aspects of Mal'cev operations, \emph{Cah.~Topol. G\'eom.~Diff\'er.~Cat\'eg.} XL (1999) 297--316.

\bibitem{BJ} D.~Bourn and G.~Janelidze, Extensions with abelian kernel in protomodular categories, \emph{Georgian Math.~J.} 11 (2004) 645--654.

%\bibitem{Breen} L.~Breen. Bitorseurs et Cohomologie Non Ab\'elienne, in \emph{The Grothendieck Festschrift}, Vol. I, 401--476. Progress in Mathematics, 86. Birkh\"auser, Boston, MA (2007).

\bibitem{KBrown} K. S. Brown. Cohomology of Groups, Graduate Text in Mathematics, vol. 87, Springer (1982) 309pp.

\bibitem{pf} A.~S.~Cigoli, S.~Mantovani, and G.~Metere. A push forward construction and the comprehensive factorization for internal crossed modules, \emph{Appl.~Categ.~Structures} 22 (2014) 931--960.

\bibitem{yoneda} A.~S.~Cigoli, S.~Mantovani, and G.~Metere.  Fibered aspects of Yoneda's regular span, \emph{Adv.~Math.} 360 (2020) 106899, 62pp.

\bibitem{Cat-frac} A.~S.~Cigoli, S.~Mantovani, and G.~Metere, Discrete and Conservative Factorizations in Fib(B), \emph{Appl.~Categ.~Structures} 29 (2021) 249--265.

\bibitem{categorical_ot} A.~S.~Cigoli, S.~Mantovani, G.~Metere, and E.~M.~Vitale. Fibred-categorical obstruction theory, \emph{J.~Algebra} 593 (2022) 105--141.

\bibitem{groupal} A.~S.~Cigoli, S.~Mantovani, and G.~Metere. On pseudofunctors sending groups to 2-groups, \emph{Mediterr. J. Math.} 20 (2023) n.~17, 25pp.

\bibitem{CM16} A.~S.~Cigoli and G.~Metere. Extension theory and the calculus of butterflies, \emph{J.~Algebra} 458 (2016) 87--119.

\bibitem{Day} B.~Day, Note on monoidal localisation, \emph{Bull.~Austr.~Math.~Soc.} 8 (1973) 1--16.

%\bibitem{DS1997} B.~Day and R.~Street. Monoidal Bicategories and Hopf Algebroids, \emph{Adv.~Math.} 129 (1997).

\bibitem{Gerstenhaber} M.~Gerstenhaber, On the deformation of rings and algebras: II, Annals of Mathematics, Second Series, Vol. 84, No. 1 (1966) 1--19.

%\bibitem{GPS1995} R.~Gordon, A.~J.~Power, and R.~Street. Coherence for tricategories, \emph{Mem. Amer. Math. Soc.} 558 (1995).

\bibitem{Holt} D.~F.~Holt, An interpretation of the Cohomology Groups $H^n(G,M)$, \emph{J.~Algebra} 60 (1979) 307--320.

\bibitem{KLW} G.~M.~Kelly, S.~Lack, and R.~F.~C.~Walters, Coinverters and categories of fractions for categories with structure, \emph{Appl.~Categ.~Structures} 1 (1993) 95--102.

\bibitem{Categories} S.~Mac Lane, \emph{Categories for the Working Mathematician}, Graduate Texts in Mathematics, Vol.~5, Springer-Verlag, New York, (1971).

\bibitem{Homology} S.~Mac Lane, \emph{Homology}, Die Grundlehren der mathematischen Wissenschaften, Bd. 114. Academic Press, Inc., Publishers, New York; Springer-Verlag, Berlin-G\"ottingen-Heidelberg (1963).

%\bibitem{McC2000} P.~McCrudden. Balanced coalgebroids, \emph{Theory Appl.~Categ.} 7 (2000) 71--147.

\bibitem{MV2020} J.~Moeller and C.~Vasilakopoulou, Monoidal Grothendieck Construction, \emph{Theory Appl.~Categ.} 35 (2020) 1159--1207.

\bibitem{Noohi}  B. Noohi, On weak maps between 2-groups, 	arXiv:math/0506313 (2005).

\bibitem{Rodelo2009} D.~Rodelo, Directions for the Long Exact Cohomology Sequence in Moore Categories, \emph{Appl. Categ. Structures} 17 (2009) 387--418.

\bibitem{Shulman} M.~Shulman. Framed bicategories and monoidal fibrations, \emph{Theory Appl.~Categ.} 20 (2008).

\bibitem{Sinh} H.~X.~Sinh, \emph{Gr-cat\'egories}, Th\`ese de doctorat d'\'etat, Universit\'e Paris VII (1975).

\bibitem{Vitale03} E.~M.~Vitale. On the categorical structure of $H^2$, \emph{J.~Pure Appl.~Algebra} 177 (2003) 303--308.

\end{thebibliography}
\end{document}